\newtheorem{theorem}{Theorem}
\newtheorem{proposition}[theorem]{Proposition}
\newtheorem{lemma}[theorem]{Lemma}
\newtheorem{corollary}[theorem]{Corollary}
\theoremstyle{definition}
\newtheorem{definition}{Definition}
\newcommand{\B}{\mathscr{B}}
\newcommand{\C}{\mathbb{C}}
\newcommand{\D}{\mathbb{D}}
\renewcommand{\O}{\mathscr{O}}
\newcommand{\R}{\mathbb{R}} 
\newcommand{\T}{\mathbb{T}}
\title{A disc formula for  plurisubharmonic subextentions \\ And a characterization of Thinness of subsets of $\C^n$}
\author{Djire Ibrahim K.}
\address{Jagiellonian University, Department of Mathematics}
\email{Ibrahim.Djire@im.uj.edu.pl}
\keywords{ Analytic discs, plurisubharmonic functions, non-thin set, maximum principle, envelope of functional}
\begin{document}
\begin{abstract}
Let $X\subset \C^n$ be a domain and  $W\subset X$ be a subdomain, $X\neq W$. Suppose that $\varphi_1$ is upper semicontinuous in $X\setminus \overline W$ and $\varphi_2$ is upper semicontinuous in $W.$  We define $\varphi: X\longrightarrow \overline\R$ by  $\varphi= \varphi_1$ in $X\setminus \overline{W}$ , $\varphi= \min\{\varphi_1^* , \varphi_2^*\}$ on $X\cap\partial W$ and $\varphi= \varphi_2$ in $W$. Under suitable conditions on $W$ and $X$,  we will  prove that 
$$EH(x)=  \inf\left \{ \frac{1}{2\pi}\int_0^{2\pi}\varphi\circ f(e^{i\theta})d\theta, f\in\O(\overline{\D},X),  f(0)=x  \right \} $$ is the largest plurisubharmonic function on $X$ less than $\varphi.$\\
In case where $\varphi_1^*=\varphi_2^*$ on $X\cap\partial W$ we get the classical result of Poletsky. 
In case where $\varphi_2$ is big  enough in $W$ our work looks as a subextention result of Larusson-Poletsky. In some sense we have a generalization of these results.
Our disc formula can also be  seen as a generalization of Poletsky's classical result to the situation where the kernel of Poisson functional is not upper semicontinuous on $X.$ At the end we will  characterize the $thinness$ of a set at a point  with  closed  analytic discs and give a version of maximum principle for certain $non$-$thin$ sets in $\C^n.$
\end{abstract}
\maketitle

\section{Introduction}
\noindent
The main goal of the theory of disc functionals is to provide disc formulas for important extremal plurisubharmonic functions in pluri\-potential theory, that is, to describe these functions as envelopes of disc functionals.  This brings the geometry of analytic discs into play in pluripotential theory.  Disc formulas have been proved for largest plurisubharmonic minorants  in ([3], [2], [1]).

Let  $X$ be a domain in  complex affine space $\mathbb{C}^n$ and $W\subset X$ a subdomain, $X\neq W$. Consider two upper semicontinuous functions $\varphi_1: X\setminus \overline W \longrightarrow\R $ and $\varphi_2: W\longrightarrow\R $. We define $\varphi: X\longrightarrow \R$ by $\varphi= \varphi_1$ in $X\setminus \overline{W}$ , $\varphi= \min\{\varphi_1^* , \varphi_2^*\}$ on $X\cap\partial W$ and $\varphi= \varphi_2$ in $W$. Notice that $\varphi$ is not necessarily  upper semicontinuous on $X$.  We take  the constant function $-\infty$ to be plurisubharmonic.\\
\smallskip\noindent
{\it Acknowledgement.}  I wish to thank my  supervisor Professor Armen Edigarian and my colleague Mr. Dongwei Gu for interesting discussions.  
\section{Disc formula}
Let $\D$ denote the unit disc,  $\T$ the unit circle and $\sigma$ the arc length measure on $\T$.
Set
 $$\B_2=\{f\in\O(\overline{\D},X), f(\T)\subset X\setminus \overline{W}\}$$
 $$\B_1=\{f\in\O(\overline{\D},X), f(\T)\subset W\}.$$
Set $\B=\B_1\cup\B_2$. Assume that for all $x\in X$ there is $f\in \B$ such that $f(0)=x$. We define $$
F(x)=\inf\left\{\frac{1}{2\pi}\int_0^{2\pi}\varphi\circ f(e^{i\theta})d\theta, f\in\B, f(0)=x\right\}.$$
\begin{proposition}
If for all $x\in X$ there is   $f\in\B$ so that $f(0)=x,$ then
$F$ is upper semicontinuous on $X$.
\end{proposition}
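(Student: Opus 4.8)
The plan is to prove the slightly stronger assertion that $\limsup_{x\to x_0}F(x)\le F(x_0)$ at every $x_0\in X$. The mechanism is perturbation by translation: given a competitor disc $f\in\B$ for $F(x_0)$, the translate $f_a(z):=f(z)+a$ with $a=x-x_0$ is again holomorphic on a neighborhood of $\overline\D$, satisfies $f_a(0)=x$, and — this is the point of splitting $\B$ into $\B_1$ and $\B_2$ — keeps its boundary circle on the same side of $W$ as $f$. So $f_a$ will be an admissible competitor for $F(x)$ once $|a|$ is small, and it remains only to control how much its Poisson integral can grow as $a\to0$.

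Concretely, I would fix $x_0\in X$ and $\varepsilon>0$, choose by hypothesis $f\in\B$ with $f(0)=x_0$ and, shrinking, with $\frac{1}{2\pi}\int_0^{2\pi}\varphi\circ f(e^{i\theta})\,d\theta<F(x_0)+\varepsilon$. Since $f(\overline\D)$ is a compact subset of the open set $X$, and $f(\T)$ is a compact subset of the open set $X\setminus\overline W$ (if $f\in\B_2$) or of the open set $W$ (if $f\in\B_1$), there is $\delta>0$ so that for every $a\in\C^n$ with $|a|<\delta$ one has $f_a(\overline\D)\subset X$ and $f_a(\T)$ still contained in $X\setminus\overline W$, resp. $W$; thus $f_a\in\B$, in the same component. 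Consequently, for every $x$ with $|x-x_0|<\delta$ the disc $g:=f_{x-x_0}$ is admissible for $F(x)$, so $F(x)\le\frac{1}{2\pi}\int_0^{2\pi}\varphi\circ f_{x-x_0}(e^{i\theta})\,d\theta$.

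The only analytic input is then $\limsup_{a\to0}\frac{1}{2\pi}\int_0^{2\pi}\varphi\circ f_a(e^{i\theta})\,d\theta\le\frac{1}{2\pi}\int_0^{2\pi}\varphi\circ f(e^{i\theta})\,d\theta$. Here the structure of $\B$ is exactly what is needed: on $f(\T)$ and all its nearby translates the kernel $\varphi$ agrees with $\varphi_1$ (case $\B_2$) or with $\varphi_2$ (case $\B_1$), each of which is upper semicontinuous on its open domain and hence bounded above by some finite $M$ on a neighborhood of the relevant compact set. With this uniform majorant, the reverse Fatou lemma together with upper semicontinuity of $\varphi_j$ gives, for any sequence $a_k\to0$ and the appropriate $j\in\{1,2\}$,
\begin{align*}
\limsup_{k\to\infty}\frac{1}{2\pi}\int_0^{2\pi}\varphi_j\bigl(f(e^{i\theta})+a_k\bigr)\,d\theta
&\le \frac{1}{2\pi}\int_0^{2\pi}\limsup_{k\to\infty}\varphi_j\bigl(f(e^{i\theta})+a_k\bigr)\,d\theta\\
&\le \frac{1}{2\pi}\int_0^{2\pi}\varphi_j\bigl(f(e^{i\theta})\bigr)\,d\theta .
\end{align*}
Since this holds along every sequence $a_k\to0$, it holds with $\limsup_{a\to0}$, and combining with the previous paragraph yields $\limsup_{x\to x_0}F(x)\le\frac{1}{2\pi}\int_0^{2\pi}\varphi\circ f(e^{i\theta})\,d\theta<F(x_0)+\varepsilon$; letting $\varepsilon\to0$ finishes the proof (the case $F(x_0)=-\infty$ being covered by letting $F(x_0)+\varepsilon$ range over all reals).

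I expect the main obstacle to be bookkeeping rather than conceptual, concentrated in two places: first, verifying that the translated boundary circle never leaves $X\setminus\overline W$ (resp. $W$), so that one is never forced to evaluate $\varphi$ on $X\cap\partial W$, where it is only defined as a minimum of two regularizations and need not be upper semicontinuous; and second, checking that the reverse Fatou step is legitimate, which it is precisely because the competitor discs were chosen so that $\varphi$ restricted to their boundary circles coincides with an honest upper semicontinuous function admitting a local finite upper bound.
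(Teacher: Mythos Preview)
Your proof is correct and follows essentially the same route as the paper: translate the competitor disc by $x-x_0$, observe that small translates keep $f(\overline\D)$ inside $X$ and $f(\T)$ inside the same open piece ($W$ or $X\setminus\overline W$), so the translate remains admissible in $\B$, and then control the boundary integral. The only difference is in that last step: the paper approximates the upper semicontinuous $\varphi_j$ from above by a continuous function and uses continuity to produce a whole neighborhood on which $F<c$, whereas you invoke reverse Fatou with a local finite majorant; the two devices are interchangeable here.
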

\begin{proof}
Let $c\in \R$, $x\in X$ such that $F(x)<c.$ We will prove that there is a neighborhood $V$ of $x$ such that $F(y)<c$ for all $y\in V.$  By definition of $F$ there is $f_0\in\B$ with $f_0(0)=x$ such that $ \frac{1}{2\pi}\int_0^{2\pi}\varphi\circ f_0(e^{i\theta})d\theta<c.$ Assume that $f_0\in\B_1$ then $f_0(\T)\subset W.$ As $\varphi$ is upper semicontinuous on $W$ we can find a decreasing sequence of continuous functions $(\psi_j)_j$ defined on $W$ which converges to $\varphi$. There is  $j_0>1$ such that $ \frac{1}{2\pi}\int_0^{2\pi}\psi_{j_0}\circ f_0(e^{i\theta})d\theta <c. $ As $W$ is open and $\psi_{j_0}$ is continuous then one can find $V \subset\subset X$ a small neighborhood of $x$ such that $\{f_0(\T)+y-x, y\in V \} \subset\subset W$,  $\{f_0(\overline{\D})+y-x, y\in V \} \subset\subset X$ and $\frac{1}{2\pi}\int_0^{2\pi}\psi_{j_0}( f_0(e^{i\theta})+y-x)d\theta<c$ for all $y$. Hence $F(y)<c$ for all $y\in V$. 
\end{proof}
\noindent
Notice that on $X\cap \partial W$ we may have $\varphi<F$ since $F(x)=\lim_{r\rightarrow 0}\sup_{B(x,r)} F.$ For example take $\varphi_1=2$ and $\varphi_2=-1$. If $W\subset\subset X$ then we get $F=2$ on $\partial W$ while $\varphi=-1$ there.  Now consider the functions 
$$PF(x)=\inf\left \{ \frac{1}{2\pi}\int_0^{2\pi}F\circ f(e^{i\theta})d\theta, f\in\O(\overline{\D},X),  f(0)=x  \right \}. $$
 $$EH(x)=  \inf\left \{ \frac{1}{2\pi}\int_0^{2\pi}\varphi\circ f(e^{i\theta})d\theta, f\in\O(\overline{\D},X),  f(0)=x  \right \} .$$
 Recall that $F$ is upper semicontinuous, then by Poletsky's classical theorem we have $$PF=\sup\{v\in PSH(X), v\leq F\}$$ is the largest plurisubharmonic and less than $F.$ Our goal here is to prove that in $X$  $$EH=PF\leq \varphi.$$ 
 Remark that by definitions $EH\leq F$ and $EH\leq \varphi.$ If we reach to prove $PF\leq\varphi$ then we get $PF\leq EH.$
\noindent
 The following result due to (Bu-Schachermayer) is the core of the proof of Lemma 3. For a detailed proof of the lemma below see $[1]$. 
\begin{lemma}[Bu-Schachermayer]
Let $A$ be a compact subset of $\T$ and $\psi\in C(\overline{\D})$. Then there exists a sequence $(p_k)$ of polynomials  $p_k:\C \rightarrow \C$ satisfying
\begin{itemize}
\item[(i)] $p_k(\D)\subset \D$ and $p_k(0)=0,$
\item[(ii)] $p_k \rightarrow 0$ uniformly on every compact subset of $\overline{\D}\setminus A$,
\item[(iii)] $\int_{A} \psi \circ p_k(t)d\sigma(t) \longrightarrow \sigma(A)\int_{\T}\psi(t)d\sigma(t)$.
\end{itemize}
\end{lemma}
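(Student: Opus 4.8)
The plan is to deduce the lemma from one \emph{quantitative} assertion and then diagonalise. \emph{Claim: if $A$ is a finite union of pairwise disjoint closed arcs and $\varepsilon>0$, there is a function $g$ holomorphic on a neighbourhood of $\overline{\D}$ with $g(\overline{\D})\subset\D$, $g(0)=0$, $|g|<\varepsilon$ on $\{z\in\overline{\D}:\operatorname{dist}(z,A)\ge\varepsilon\}$, and $\bigl|\int_A\psi\circ g\,d\sigma-\sigma(A)\int_{\T}\psi\,d\sigma\bigr|<\varepsilon$.} Granting this, a long enough Taylor partial sum $p$ of $g$ is a polynomial with $p(0)=0$, with $p\to g$ uniformly on $\overline{\D}$, and with $p(\overline{\D})\subset\D$ once $\sup_{\overline{\D}}|g|<1$ is used, so the Claim also holds with ``polynomial''. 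For a general compact $A\subset\T$, pick finite unions of disjoint closed arcs $A_k\supset A$ with $A_k\subset\{\operatorname{dist}(\cdot,A)<1/k\}$ and $\sigma(A_k\setminus A)\to0$, apply the polynomial Claim to $A_k$ with $\varepsilon=1/k$ to get $p_k$, and note $\bigl|\int_{A}\psi\circ p_k\,d\sigma-\sigma(A)\int_{\T}\psi\,d\sigma\bigr|\le\varepsilon_k'+\sigma(A_k\setminus A)\|\psi\|_\infty+|\sigma(A_k)-\sigma(A)|\,|\int_{\T}\psi\,d\sigma|\to0$, where $\varepsilon_k'\to0$ is the error for $A_k$; moreover for any compact $K\subset\overline{\D}\setminus A$ one has $\operatorname{dist}(K,A_k)\ge\tfrac12\operatorname{dist}(K,A)>1/k$ for $k$ large, hence $|p_k|<1/k$ on $K$. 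This gives (i), (ii), (iii).

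To prove the Claim, write $A=I_1\cup\cdots\cup I_m$ with the $I_j$ disjoint nondegenerate closed arcs, so $0<\sigma(A)<1$. Let $\omega$ be the harmonic measure of $A$ in $\D$, i.e.\ the Poisson integral of the characteristic function of $A$: then $0<\omega<1$ in $\D$, the boundary function of $\omega$ is $1$ on the interior $I_j^{\circ}$ of each arc and $0$ on $\T\setminus A$, and $\omega$ is continuous on $\overline{\D}$ off the $2m$ endpoints. Let $\widetilde\omega$ be a single-valued harmonic conjugate of $\omega$ on $\D$, fix a large $M>0$, and put $g_0=\exp\bigl(M(\omega-1)+iM\widetilde\omega\bigr)$: this is holomorphic and zero-free in $\D$, with $|g_0|=e^{M(\omega-1)}<1$ in $\D$, boundary modulus $e^{-M}$ on $\T\setminus A$ and $1$ on each $I_j^{\circ}$; its only defect is that $\widetilde\omega$ has a logarithmic singularity at each arc endpoint, so $g_0$ is not continuous on $\overline{\D}$. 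Repair this and gain holomorphy past $\overline{\D}$ at once by passing to $f_r(z):=g_0(rz)$ with $r<1$ close to $1$: then $f_r$ is holomorphic on $\{|z|<1/r\}\supset\overline{\D}$ and $f_r(\overline{\D})=g_0(\overline{r\D})\subset\D$ since $|g_0|<1$ on the compact $\overline{r\D}\subset\D$. Finally replace $f_r$ by $\phi_{f_r(0)}\circ f_r$ with $\phi_b(z)=(z-b)/(1-\overline b z)$, which keeps the disc invariant, sends $0$ to $0$, and differs from $f_r$ by $O(e^{-M(1-\sigma(A))})$ uniformly on $\overline{\D}$ because $|f_r(0)|=|g_0(0)|=e^{-M(1-\sigma(A))}$ is tiny.

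Now check the three estimates, choosing first $M$ large and then $r$ close to $1$. The condition $g(0)=0$ is built in. On $\{z\in\overline{\D}:\operatorname{dist}(z,A)\ge\varepsilon\}$ the continuous function $1-\omega$ is bounded below by some $c(\varepsilon)>0$, so $|g_0|\le e^{-Mc(\varepsilon)}$ there and $|f_r|\le e^{-Mc(\varepsilon)/2}$ for $r$ close to $1$; large $M$ and a fine Taylor truncation make this $<\varepsilon$. For the integral, the heart is equidistribution: on $I_j^{\circ}$ the boundary function of $\omega$ equals its maximum $1$, so by Hopf's boundary lemma its outward normal derivative is strictly positive, hence by the Cauchy--Riemann equations the tangential derivative of $\widetilde\omega$ is nowhere zero on $I_j^{\circ}$; combined with the opposite-signed logarithmic singularities at the two endpoints, $\widetilde\omega$ restricts to a strictly monotone real-analytic diffeomorphism of $I_j^{\circ}$ onto $\R$. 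Writing $I_j^{\circ}=\{e^{i\theta}:\theta\in(a_j,b_j)\}$ and substituting $s=\widetilde\omega(e^{i\theta})$, for every $F\in C(\T)$ we get $\int_{a_j}^{b_j}F\bigl(e^{iM\widetilde\omega(e^{i\theta})}\bigr)\frac{d\theta}{2\pi}\to\sigma(I_j)\int_{\T}F\,d\sigma$ as $M\to\infty$, by the standard fact that $\int_{\R}F(e^{iMs})\rho(s)\,ds\to\bigl(\int_{\T}F\,d\sigma\bigr)\int_{\R}\rho$ for $\rho\in L^1(\R)$. Taking $F=\psi|_{\T}$, using that the boundary function of $g_0$ equals $e^{iM\widetilde\omega}$ on $I_j^{\circ}$, and summing over $j$ with $\sum_j\sigma(I_j)=\sigma(A)$, gives $\int_A\psi\circ g_0\,d\sigma\to\sigma(A)\int_{\T}\psi\,d\sigma$; for fixed large $M$, $\int_A\psi\circ f_r\,d\sigma\to\int_A\psi\circ g_0\,d\sigma$ as $r\to1$ by dominated convergence (radial limits of $g_0$ exist a.e.\ on $A$ and there $\psi\circ g_0$ is $\psi$ evaluated on $\T$), and the final automorphism and truncation each perturb the integral arbitrarily little. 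This proves the Claim.

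The main difficulty is purely organisational: four approximations are nested — the passage $g_0\mapsto f_r=g_0(r\,\cdot\,)$ curing the endpoint discontinuities and producing holomorphy past $\overline{\D}$, the passage to a Taylor polynomial, the limit $M\to\infty$ driving the equidistribution, and the outer-arc diagonalisation handling a general compact $A$ — and they must be carried out in the order ($M$, then $r$, then the polynomial, then the diagonal) so that no earlier estimate is undone. The only step that is not soft is the equidistribution, and it rests on the single structural fact that the conjugate harmonic measure of a finite union of arcs is strictly monotone on each component (Hopf's lemma); this is precisely why a general compact $A$ is first reduced to a finite union of arcs.
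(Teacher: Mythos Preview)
The paper does not actually prove this lemma: it attributes the result to Bu--Schachermayer and simply refers the reader to reference~[1] for a detailed proof. Your proposal therefore supplies far more than the paper itself does.

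Your argument is essentially the classical construction and is correct. The mechanism --- build $g_0=\exp\bigl(M(\omega-1)+iM\widetilde\omega\bigr)$ from the harmonic measure $\omega$ of a finite union of arcs, push the image into $\D$ by the dilation $f_r=g_0(r\,\cdot\,)$, normalise with a disc automorphism, truncate to a polynomial, then diagonalise over outer arc-approximations $A_k\supset A$ --- is exactly the standard route, and your equidistribution step via Hopf's lemma (forcing $\widetilde\omega$ to be strictly monotone on each arc, hence a diffeomorphism onto $\R$) together with the Riemann--Lebesgue averaging $\int_{\R}F(e^{iMs})\rho(s)\,ds\to(\int_{\T}F\,d\sigma)\int_{\R}\rho$ is the right engine.

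Two cosmetic points worth tightening. First, the degenerate cases fall outside your Claim: if $A=\T$ then $\omega\equiv 1$, $g_0\equiv 1$, and the construction collapses, but $p_k(z)=z^k$ does the job directly; if $A=\varnothing$ take $p_k\equiv 0$. Second, to guarantee that the Taylor truncation still satisfies $p(\D)\subset\D$, you need $\sup_{\overline{\D}}|\phi_b\circ f_r|<1$ \emph{strictly}; this holds because $f_r(\overline{\D})=g_0(\overline{r\D})$ is a compact subset of $\D$ and $\phi_b$ maps $\D$ to $\D$, hence the image is compact in $\D$. With these remarks in place, the four nested approximations are ordered correctly and none undoes an earlier estimate.
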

\noindent
\begin{lemma}If for all $x\in X$ there is   $f\in\B$ so that $f(0)=x$,
 then $EH \leq  PF$. 
 \end{lemma}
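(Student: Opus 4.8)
The plan is to establish the pointwise bound $EH(x)\le PF(x)$ for every $x\in X$ by the Poletsky disc--gluing technique, with the Bu--Schachermayer polynomials of Lemma 2 as the gluing device. Fix $x$, an arbitrary disc $g\in\O(\overline{\D},X)$ with $g(0)=x$, and $\varepsilon>0$. Since $EH(x)$ is a lower bound for $\frac{1}{2\pi}\int_0^{2\pi}\varphi\circ h(e^{i\theta})\,d\theta$ over all $h\in\O(\overline{\D},X)$ with $h(0)=x$, and $PF(x)$ is the infimum of $\frac{1}{2\pi}\int_0^{2\pi}F\circ g(e^{i\theta})\,d\theta$ over all such $g$, it suffices to produce discs $h_k\in\O(\overline{\D},X)$ with $h_k(0)=x$ and
\[
\limsup_{k\to\infty}\frac{1}{2\pi}\int_0^{2\pi}\varphi\circ h_k(e^{i\theta})\,d\theta\ \le\ \frac{1}{2\pi}\int_0^{2\pi}F\circ g(e^{i\theta})\,d\theta+C\varepsilon ,
\]
then let $\varepsilon\to0$ and take the infimum over $g$.

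For the construction, $F\circ g$ is upper semicontinuous, hence bounded above, on $\T$ by Proposition 1. Choose finitely many pairwise disjoint closed arcs $A_1,\dots,A_m\subset\T$ with $\sigma(\T\setminus\bigcup_jA_j)<\varepsilon$ and with $\sum_j\sigma(A_j)\sup_{A_j}(F\circ g)\le\frac{1}{2\pi}\int_0^{2\pi}F\circ g\,d\theta+\varepsilon$ (possible since $F\circ g$ is u.s.c.\ and bounded above). Pick $\zeta_j\in A_j$ realising $\sup_{A_j}(F\circ g)$ and, by the definition of $F$ and the hypothesis of the lemma, pick $f_j\in\B$ with $f_j(0)=g(\zeta_j)$ and $m_j:=\frac{1}{2\pi}\int_0^{2\pi}\varphi\circ f_j(e^{i\theta})\,d\theta<F(g(\zeta_j))+\varepsilon$. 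Crucially, each $f_j$ lies in $\B_1$ or in $\B_2$, so $f_j(\T)$ is a compact subset of the open set $W$ (resp.\ $X\setminus\overline W$), on a neighbourhood of which $\varphi$ coincides with the u.s.c.\ function $\varphi_2$ (resp.\ $\varphi_1$); there we may choose a continuous majorant $\Psi_j$ of $\varphi$ whose circular average against $f_j$ is as close to $m_j$ as we wish. Applying Lemma 2 to each arc $A_j$ (with $\Psi_j\circ f_j$) produces polynomial sequences $(p^j_k)_k$; after shrinking the arcs about the $\zeta_j$ we may also assume each $A_j$ is so small that $g$ oscillates by less than a prescribed amount on an $\overline{\D}$--neighbourhood of $A_j$. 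Set
\[
h_k\ :=\ g+\sum_{j=1}^m\bigl(f_j-g(\zeta_j)\bigr)\circ p^j_k .
\]
Then $h_k(0)=g(0)=x$ since $p^j_k(0)=0$. By part (ii) of Lemma 2, $p^j_k\to0$ uniformly on $\{|z|\le r\}$ for every $r<1$ and on compact subsets of $\T$ disjoint from $A_j$; hence $h_k\to g$ uniformly on the interior of $\D$ and on $\T\setminus\bigcup_jA_j$, while on $A_j$ we have $h_k=f_j\circ p^j_k+e^j_k$ with $\|e^j_k\|$ no larger than the oscillation of $g$ near $A_j$ plus a term tending to $0$. Finally, part (iii) of Lemma 2 with $\psi(z)=1-|z|^2$ gives $|p^j_k|\to1$ in measure on $A_j$, so off a subset of $A_j$ of vanishing measure the points $f_j\circ p^j_k$, and hence $h_k$, stay in a small neighbourhood of $f_j(\T)$ inside $W$ (resp.\ $X\setminus\overline W$); together with the interior estimate this forces $h_k(\overline{\D})\subset X$ for all large $k$.

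For the estimate one splits $\frac{1}{2\pi}\int_0^{2\pi}\varphi\circ h_k\,d\theta$ into the parts over the $A_j$ and over $\T\setminus\bigcup_jA_j$. On the part of $A_j$ where $h_k$ lies in the neighbourhood of $f_j(\T)$ just described, one bounds $\varphi\circ h_k\le\Psi_j\circ h_k$, uses uniform continuity of $\Psi_j$ to replace $\Psi_j\circ h_k$ by $\Psi_j\circ f_j\circ p^j_k$ up to a small error, and invokes part (iii) of Lemma 2 to see that the $\limsup$ of this contribution is at most $\sigma(A_j)m_j$ plus errors controlled by the oscillation of $g$ near $A_j$ and by how closely $\Psi_j$ approximates $\varphi\circ f_j$. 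The remaining parts (the subset of $A_j$ of vanishing measure and the set $\T\setminus\bigcup_jA_j$) have total measure $<C\varepsilon$, and there $\varphi\circ h_k$ is bounded above by a constant, all the $h_k(\overline{\D})$ lying in one fixed compact subset of $X$ on which $\varphi$ is bounded above; this contributes at most $C'\varepsilon$. Summing over $j$ and using $m_j<\sup_{A_j}(F\circ g)+\varepsilon$ and the choice of the arcs yields the displayed inequality.

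The main obstacle is the gluing step, and concretely the two facts that must be checked once the $f_j$ are fixed: that the perturbed discs $h_k$ still take all their values in the domain $X$ rather than merely in $\C^n$, and --- the point genuinely new compared with Poletsky's classical setting, where $\varphi$ is upper semicontinuous on all of $X$ --- that over each arc $A_j$ the boundary $h_k(A_j)$ stays, off a set of vanishing measure, inside the single region $W$ or $X\setminus\overline W$ where $\varphi$ is upper semicontinuous, so that the bound $\varphi\circ h_k\le\Psi_j\circ h_k$ and the limit in part (iii) of Lemma 2 are legitimate. This is exactly where one needs the extra information $|p^j_k|\to1$ in measure on $A_j$ extracted from Lemma 2(iii), the relative compactness of $f_j(\T)$ in its region, and the freedom to shrink the arcs (after the $f_j$ are chosen) so that $g$ oscillates by less than the distance from $f_j(\T)$ to the boundary of its region. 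Arranging all these smallness requirements consistently, and handling basepoints $g(\zeta_j)$ on $X\cap\partial W$ as well as places where $\varphi\circ g$ or $F\circ g$ is not bounded below (by the usual truncation, using that $\varphi$ is locally bounded above on $X$), is the bulk of the technical work.
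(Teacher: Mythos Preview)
Your proposal is correct and follows essentially the same route as the paper: Poletsky-type disc-gluing using the Bu--Schachermayer polynomials of Lemma~2, with local continuous majorants of $\varphi$ on the two regions $W$ and $X\setminus\overline W$ playing the role of your $\Psi_j$ (the paper's $B_j$), and a final chain of integral inequalities of the same form.

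Two organisational differences are worth noting. First, the paper works not with $F$ itself but with an arbitrary continuous majorant $\psi\ge F$, which makes the ``oscillation of $\psi\circ h$ on arcs'' step routine; you instead approximate $\int_\T F\circ g$ directly by upper Riemann sums, which is equally valid since $F\circ g$ is upper semicontinuous. Second, the paper chooses, \emph{for each} $t\in\T$, the auxiliary disc $g_t$ first and only then the arc $I_t$ (so the arc can be tailored to $g_t$), and extracts a finite subcover by compactness; you instead fix the arcs $A_j$, pick $\zeta_j$ and $f_j$, and then speak of ``shrinking the arcs about the $\zeta_j$''. That last step, as written, has a circular dependency: shrinking destroys $\sigma(\T\setminus\bigcup_j A_j)<\varepsilon$, and recovering it forces new centers and new $f_j$. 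You flag this (``arranging all these smallness requirements consistently \dots is the bulk of the technical work''), and indeed the clean way out is the paper's order: points and discs first, arcs after.

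On the other hand, you are more careful than the paper on one genuine point. To justify $\varphi\circ h_k\le\Psi_j\circ h_k$ on (most of) $A_j$ one needs $h_k(t)$ to lie in the region $W$ or $X\setminus\overline W$ singled out by $f_j$, and since $p_k^j(t)$ ranges over $\overline\D$ rather than $\T$, the image $f_j(p_k^j(t))$ need not lie near $f_j(\T)$. Your extraction of $|p_k^j|\to 1$ in measure on $A_j$ from Lemma~2(iii) with $\psi(z)=1-|z|^2$ neatly closes this, at the cost of an exceptional set of vanishing measure that you control by the uniform upper bound on $\varphi$ over a fixed compact. The paper's chain of inequalities passes over this point without comment.
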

\begin{proof} 
Let  $x\in X$, $h\in\O(\overline{\D},X)$, with $h(0)=x$, $\epsilon>0$ and $\psi$ a continuous function on $X$ bigger than $F$. We will prove that there is $G\in\O(\overline{\D},X)$, $G(0)=x$ such that $$  
\int_{\T}\varphi\circ G (t) d\sigma(t)\leq \int_{\T} \psi\circ h(t) d\sigma(t). $$
 Let $t_0\in \T$ there is $g_0\in\O(\overline{\D},\C^n)$ such that $g_0(0)=0$, $h(t_0)+g_0\in\B$ and
\begin{equation}
  \int_0^{2\pi}\varphi ( h(t_0)+g_0(z))d\sigma(z)< \psi\circ h(t_0)+\epsilon/2.
 \end{equation}
 We may assume that the map $h(t_0)+g_0$ belongs to $\B_1$. Take a continuous function $B_0$ on $W$ bigger than $\varphi$ such that
 \begin{equation}
  \int_0^{2\pi}B_0 ( h(t_0)+g_0(z))d\sigma(z)<\psi\circ h(t_0)+\epsilon.
  \end{equation}
 Extend $B_0$ to a continuous function on $X.$  As $W$ is open, $B_0$ and $\psi$ are continuous, then there is an open arc $I_0$ containing $t_0$ such that 
 $$  \{ h(t)+g_0(z), t\in I_0, z\in\overline{\D} \} \subset\subset X \mbox{  and } \{ h(t)+g_0(z), t\in I_0, z\in \T \} \subset\subset W. $$
 $$|B_0(h(t)+g_0(z))-B_0(h(t_0)+g_0(z))|<\epsilon\mbox{  for } t\in I_0, z\in\T,$$
 and
 $$  |\psi\circ h(t)-\psi\circ h(t_0)|<\epsilon, \mbox{ for } t\in I_0.$$
 By compactness there is $N>0$, points $t_1,..., t_N \in \T$, open  arcs $I_1,....,I_N$, holomorphic maps $g_1,..., g_N \in\O(\overline{\D},\C^n)$ and $B_1,...., B_N$ continuous functions $X$ bigger than $\varphi$  either on $W$ or on $X\setminus\overline{W}$ such that
$$t_j\in I_j, g_j(0)=0, h(t_j)+g_j\in\B \mbox{ and } \T\subset\cup I_j,$$
and
$$  \{ h(t)+g_j(z), t\in I_j, z\in\overline{\D} \} \subset\subset X\mbox{and  }  \{ h(t)+g_j(z), t\in I_j, z\in\T \} \subset\subset (W\mbox{ or } X\setminus \overline{W}) $$
\begin{equation}
  \int_0^{2\pi}B_j ( h(t_j)+g_j(z))d\sigma(z)<\psi\circ h(t_j)+\epsilon,
\end{equation}
\begin{equation}
 |B_j(h(t)+g_0(z))-B_j(h(t_j)+g_0(z))|<\epsilon\mbox{  for } t\in I_j, z\in\T,
\end{equation}
\begin{equation}
 |\psi\circ h(t)-\psi\circ h(t_j)|<\epsilon, \mbox{ for } t\in I_j.
 \end{equation}
Choose $\delta_0$ very small such that  for all $j$

$$
\left\{h(t)+g_j(z)+ x , ||x||<\delta_0, t\in I_j, z\in \overline\D \right\} \subset\subset X,
$$
$$
\left\{h(t)+g_j(z)+ x , ||x||<\delta_0, t\in I_j, z\in \T \right\} \subset\subset ( W \mbox{  or } X \setminus\overline{W}) 
$$
 and $K\subset\subset X$ an open set containing 
$$ \bigcup_{j=1}^{N}\left\{h(t)+g_j(z)+ x , ||x||<\delta_0, t\in I_j, z\in \overline{\D}\right\}\cup h(\overline{\D}).$$
Take $C> \sum_j\sup_{\overline{K}} |B_j|+\sup_{\overline{K}}|\psi|+|\sup_{\overline{K}}\varphi|$ and disjoint closed arcs $J_j\subset I_j$ such that 
\begin{equation}
C\sigma (\T\setminus \cup_j J_j)<\epsilon.
\end{equation}
By uniform continuity of $B_j$ on $\overline{K}$ there is $0<\delta<\delta_0$ such that
\begin{equation}
|B_j(x_1)-B_j(x_2)|<\epsilon, \mbox{ for all } x_1, x_2  \in K \mbox {  with }  ||x_1-x_2||<\delta,  \mbox {  for all  } j. \end{equation}
Take a small open neighborhood  $V_j$ of $J_j$ such that $$(\cup^N_{{i=1}, {i\neq j}} J_i)\cup\{0\}\subset \C\setminus V_j.$$
Set $K_j=\overline{\D}\setminus V_j$.
By Lemma 2  for each $i=1,...,N$ there is a polynomial $p_i$ such that: \\
\begin{itemize} 
  \item $p_i(0)=0$ and $p_i(\D)\subset \D$, 
  \item $||g_i\circ p_i(z)||< \delta/N$ for all $z\in K_i,$ and
 \item $ \int_{J_i} B_i(h(t_i)+g_i\circ p_i(t))d\sigma(t)<  \sigma(J_i)\int_{\T}B_i(h(t_i)+ g_i(t))d\sigma(t)+\epsilon/N$.
  \end{itemize}
Set $$G(z)= h(z)+\sum_{i=1}^N g_i\circ p_i(z) \mbox{ for all  } z\in \overline{\D}.$$
Then $G\in \O(\overline{\D},K)$ and $G(0)=h(0)$ and we have 
\begin{align*}
\int_{\T}\varphi\circ G (t) d\sigma(t) &\leq  \sum_{i=1}^N \int_{J_i} \varphi\circ G(t)d\sigma(t)+\epsilon  \mbox{  \;\;\;\; ''because of  (6)} \\
                                                            &= \sum_{i=1}^{N} \int_{J_i} \varphi \left( h(t)+g_i\circ p_i(t)+\sum_{j=1, i\neq j}^N g_j\circ p_j(t) \right)d\sigma(t) +\epsilon \\  
                                                              &\leq  \sum_{i=1}^{N} \int_{J_i} B_i \left( h(t)+g_i\circ p_i(t)+\sum_{j=1, i\neq j}^N g_j\circ p_j(t) \right)d\sigma(t) +\epsilon \\
                               &\leq \sum_{i=1}^N \int_{J_i} B_i\left( h(t)+g_i\circ p_i(t)\right)d\sigma(t)+2\epsilon  \mbox{  \;\;\;\; ''because of (7)}\\
&\leq \sum_{i=1}^N \int_{J_i} B_i\left( h(t_i)+g_i\circ p_i(t)\right)d\sigma(t)+3\epsilon  \mbox{  \;\;\;\; ''   because of (4)}\\
&\leq \sum_{i=1}^N \sigma(J_i)\int_{\T} B_i\left(h(t_i)+g_i(t) \right)d\sigma(t)+4\epsilon \mbox{\;\;\; \;  ''because of Lemma 2}\\
&\leq  \sum_{i=1}^N \sigma(J_i) \psi\circ h(t_i) +5\epsilon  \;\;\;\; \mbox{   ''because of (3),,}  \\
&\leq  \sum_{i=1}^N \int_{J_i} \psi\circ h(t) d\sigma(t)+6\epsilon   \;\;\;\; \mbox{ \; ''because of  (5)}  \\
&\leq   \int_{\T} \psi\circ h(t) d\sigma(t)+7\epsilon  \mbox{ \;\;\;\;because of (6)}.  
\end{align*}
We get 
$EH(x)\leq \int_{\T} \psi\circ h(t) d\sigma(t)+7\epsilon $ for all $\epsilon>0$, continuous function $\psi\geq F$ and $h\in \O(\overline{\D},X)$ with $h(0)=x.$ Hence $EH(x)\leq PF(x).$
\end{proof}
\noindent
Consider $v:X\rightarrow \R$. Recall that 
$$\limsup_{y\rightarrow x} v(x)=\inf_{r>0}(\sup\{ v(y), y\in \overline B(x,r) \}) \;\;\;\;\; (x\in X).$$
$$\limsup_{{y\rightarrow x},  \;{y\in Y}}v(x)=\inf_{r>0}(\sup\{ v(y), y\in \overline B(x,r)\cap Y\}) \;\;\;\;\; (x\in\overline Y).$$
In what follows   the word $thin$  means $Pluri$-$thin$ or $\C^n$-$thin.$
\begin{definition}
Let $Y$ be a subset of $\C^n$ and $x\in \C^n.$ Then $Y$ is $non$-$thin$ at $x$ if $x\in \overline{ Y\setminus\{x\}}$ and if, for every plurisubharmonic function $u$ defined  on a neighborhood of $x$ one has 
$$ \limsup_{z\rightarrow x,\;\; z\in Y\setminus \{x\}} u(z)= u(x).$$
\end{definition}
\noindent
As example we have, a connected set containing more than one point is $non$-$thin$ at every point of its closure see Theorem 3.8.3 in $[4]$. If $h\in\O(\overline\D,\C^n)$ then the set $h([0,1])$ is not $thin$ at any of its points see Corollary 4.8.5 in $[5].$

\begin{theorem} Let $X\subset \C^n$ be a domain and $W\subset X$. Suppose that
\begin{itemize}
\item[i)]  $\B$ covers $X.$ 
\item[ii)] $X\setminus\overline W$ and $W$ are subdomains of $X$.
\end{itemize}
Then $EH\in PSH(X)$  and 
$$\sup\{v(x), v\in PSH(X), v\leq\varphi\}= \inf\left \{ \frac{1}{2\pi}\int_0^{2\pi}\varphi\circ f(e^{i\theta})d\theta, f\in\O(\overline{\D},X),  f(0)=x  \right \} .$$
\end{theorem}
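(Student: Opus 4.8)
The plan is to reduce the theorem to the single inequality $PF\le\varphi$ on $X$; once this is in hand, everything else is formal. Indeed, Poletsky's classical theorem applied to the upper semicontinuous function $F$ already gives $PF\in PSH(X)$ together with $PF=\sup\{v\in PSH(X),\ v\le F\}$, and Lemma 3 gives $EH\le PF$. If moreover $PF\le\varphi$, then for every $f\in\O(\overline{\D},X)$ with $f(0)=x$ the sub-mean value inequality for the plurisubharmonic function $PF$ yields
\[ PF(x)\le\frac{1}{2\pi}\int_0^{2\pi}PF\circ f(e^{i\theta})\,d\theta\le\frac{1}{2\pi}\int_0^{2\pi}\varphi\circ f(e^{i\theta})\,d\theta, \]
so $PF\le EH$, whence $EH=PF\in PSH(X)$. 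The same computation shows that every $v\in PSH(X)$ with $v\le\varphi$ satisfies $v\le EH$; since $EH=PF$ is itself plurisubharmonic and $\le\varphi$, we conclude $EH=\sup\{v\in PSH(X),\ v\le\varphi\}$, which is the asserted disc formula.

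It remains to prove $PF\le\varphi$ on $X$. For interior points this is immediate from the definition of $F$: if $x\in W$, the constant disc $f\equiv x$ lies in $\B_1$, so $F(x)\le\varphi(x)=\varphi_2(x)$, and symmetrically if $x\in X\setminus\overline{W}$ the constant disc lies in $\B_2$ and $F(x)\le\varphi(x)=\varphi_1(x)$. Since $PF\le F$ on $X$, we get $PF\le\varphi$ on $W\cup(X\setminus\overline{W})$, and only the set $X\cap\partial W$ remains to be treated.

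For a point $x\in X\cap\partial W$ I would invoke hypothesis (ii). The subdomain $W$ is a connected set with more than one point, hence non-thin at every point of $\overline{W}$, in particular at $x$; applying the defining property of non-thinness to the plurisubharmonic function $PF$ (which is defined on the neighborhood $X$ of $x$) and using $PF\le F\le\varphi=\varphi_2$ on $W$ gives
\[ PF(x)=\limsup_{W\ni y\to x}PF(y)\le\limsup_{W\ni y\to x}\varphi_2(y)=\varphi_2^*(x). \]
Running the identical argument with the subdomain $X\setminus\overline{W}$ in place of $W$ gives $PF(x)\le\varphi_1^*(x)$, so $PF(x)\le\min\{\varphi_1^*(x),\varphi_2^*(x)\}=\varphi(x)$. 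This establishes $PF\le\varphi$ on all of $X$, and with the first paragraph the theorem follows.

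The only delicate point, and the place where hypothesis (ii) is genuinely used, is this boundary analysis: one must know that the two subdomains $W$ and $X\setminus\overline{W}$ really do cluster at each point of $X\cap\partial W$, so that the non-thinness characterization can be applied there and so that $\varphi_1^*$, $\varphi_2^*$ are indeed the relevant boundary limsups of $\varphi_1$, $\varphi_2$. Everything away from this step is soft, relying only on the sub-mean value inequality, Poletsky's theorem, and Lemma 3.
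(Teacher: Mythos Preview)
Your argument is correct and follows essentially the same route as the paper: reduce everything to the inequality $PF\le\varphi$, handle $X\setminus\partial W$ via constant discs in $\B$, and treat $x\in X\cap\partial W$ by invoking non-thinness of the subdomains together with $PF\in PSH(X)$. The only cosmetic difference is that you bound $PF(x)$ by both $\varphi_1^*(x)$ and $\varphi_2^*(x)$ separately and then take the minimum, whereas the paper simply assumes without loss of generality that $\varphi(x)=\varphi_2^*(x)$ and argues only on the $W$ side.
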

\begin{proof}
We have $$PF\leq EH\leq PF\leq \varphi.$$
The last inequality, we have $PF\leq F\leq \varphi$ in $X\setminus \partial W$ (because of constant maps in $\B$). Let $x\in\partial W$, we may assume that  $\varphi(x)=\varphi^*_2(x)$. As $PF\in PSH(X)$ and $W$ is $non$-$thin$ at $x$ then $$PF(x)=  \limsup_{z\rightarrow x, \; z\in W} PF(z) \leq     \limsup_{z\rightarrow x, \; z\in W}\varphi_2(z)=\varphi^*_2(x)=\varphi(x). $$ This for all $x\in\partial W.$ Thus $PF\leq \varphi$ on $X.$
The second holds  because of Lemma 3. The first one holds because $PF\in PSH(X)$ and $PF\leq \varphi.$ Hence $PF=EH\in PSH(X).$ Obviously for all $u\in PSH(X)$, $u\leq\varphi$ we have $u\leq EH$ hence $\sup\{v,  v\in PSH(X), v\leq\varphi\}\leq EH.$ As $EH\in PSH(X)$ and less than $\varphi$ then we have equality. \end{proof}
\noindent
As $\varphi$  may  be,  not upper semicontinuous then our formula generalizes Poletsky's classical formula .  For properties of $thin$ sets one can take a look in ($[4]$ and $[5]$).
\section{Thinness}
The following theorem gives a characterization of thinness of an open set at a given point in $\C^n$ in term of analytic discs.
\begin{theorem}
Let $U\subset \C^n$ be open and $x\in \C^n$. Then the following conditions are equivalent 
\begin{itemize}
\item [i)]$U$ is non-thin at $x,$
\item [ii)] For all $\epsilon>0$, all   neighborhood $V$ of $x$ there is $f\in \O(\overline\D, V)$ such that $f(0)=x$ and $$\sigma(\T\cap f^{-1}(V\cap U))>1-\epsilon.$$
\end{itemize}
\end{theorem}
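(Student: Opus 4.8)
The plan is to prove the two implications separately, using the disc-formula machinery of Section~2 for the hard direction (i)$\Rightarrow$(ii) and a soft sub-mean-value argument for (ii)$\Rightarrow$(i).

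For (ii)$\Rightarrow$(i): Suppose $U$ is non-thin fails, i.e. either $x\notin\overline{U\setminus\{x\}}$ or there is a plurisubharmonic $u$ on a neighborhood $V_0$ of $x$ with $\limsup_{z\to x,\,z\in U\setminus\{x\}}u(z)<u(x)$; after subtracting a constant and shrinking $V_0$ we may assume $u\le 0$ on $V_0$, $u(x)=0$, and $u\le -\eta<0$ on $U\cap V_0\setminus\{x\}$ for some $\eta>0$ (the case $x\notin\overline{U\setminus\{x\}}$ is handled the same way, even more easily, since then $U$ is absent near $x$). Now pick any $f\in\O(\overline{\D},V)$ with $V\subset V_0$, $f(0)=x$ and $\sigma(\T\cap f^{-1}(V\cap U))>1-\epsilon$. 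Either $f$ is constant, in which case $f(\T)=\{x\}\subset\overline{U\setminus\{x\}}$ forces $x$ to be a cluster point of $U$ only if that set is nonempty near $x$; or $f$ is non-constant and $u\circ f$ is subharmonic on $\overline{\D}$, so by the sub-mean-value inequality
$$
0=u(x)=u\circ f(0)\le\frac{1}{2\pi}\int_0^{2\pi}u\circ f(e^{i\theta})\,d\theta
\le -\eta\,\sigma(\T\cap f^{-1}(V\cap U))\le -\eta(1-\epsilon),
$$
which is a contradiction once $\epsilon<1$. Hence non-thinness must hold. (One should also observe that (ii) with arbitrarily small $V$ forces $x\in\overline{U\setminus\{x\}}$: if $\epsilon<1$ then $f(\T)$ meets $V\cap U$ on a set of positive measure, and letting $V$ shrink produces points of $U$ arbitrarily close to $x$; if $f$ is constant this already gives $x\in\overline{U}$, and a separate easy argument rules out $f$ constant when $U$ is not dense at $x$.)

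For (i)$\Rightarrow$(ii): Fix $\epsilon>0$ and a neighborhood $V$ of $x$; shrink $V$ so that it is a bounded domain with $\overline V\subset\C^n$. Apply the disc-formula apparatus with the roles: take $X=V$ itself, and set $\varphi_2\equiv 0$ on the subdomain $W:=V\cap U$ (assume $W$ is a domain; in general one works on a connected component of $V\cap U$ whose closure contains $x$, which exists exactly because $U$ is non-thin at $x$) and $\varphi_1\equiv -1$ on $V\setminus\overline W$; then $\varphi\le 0$ everywhere, $\varphi= -1$ off $\overline W$, and on $\partial W\cap V$ we get $\varphi=\min\{0,(-1)^*\}=-1$. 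One checks the hypotheses of Theorem~4 (after possibly enlarging the class $\B$ by the constant discs, so that $\B$ covers $V$). By Theorem~4, $EH=\sup\{v\in\PSH(V):v\le\varphi\}$ is plurisubharmonic on $V$. Since $\varphi\le 0$, we have $EH\le 0$; and since $W$ is non-thin at $x$ (it is a connected set with more than one point whose closure contains $x$, or one invokes non-thinness of $U$ directly), the argument in the proof of Theorem~4 shows
$$
EH(x)=\limsup_{z\to x,\ z\in W}EH(z)\le\limsup_{z\to x,\ z\in W}\varphi_2(z)=0,
$$
so in fact $EH(x)=0$. By the disc formula this means: for every $\epsilon'>0$ there is $f\in\O(\overline{\D},V)$ with $f(0)=x$ and
$$
-\epsilon'<\frac{1}{2\pi}\int_0^{2\pi}\varphi\circ f(e^{i\theta})\,d\theta\le 0.
$$
Since $\varphi=0$ on $W=V\cap U$ and $\varphi=-1$ on $V\setminus(V\cap U)$ (values on the boundary arc contributing nothing to the integral, $\T$-a.e.), the integral equals $-\,\sigma\bigl(\T\setminus f^{-1}(V\cap U)\bigr)$, hence $\sigma(\T\cap f^{-1}(V\cap U))>1-\epsilon'$. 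Taking $\epsilon'=\epsilon$ gives exactly (ii).

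The main obstacle is the first direction, and within it the verification that Theorem~4 applies to the concrete choice $X=V$, $W=V\cap U$: one must ensure $V\setminus\overline W$ and $W$ are subdomains (which may require passing to a suitable component of $V\cap U$ and arguing that non-thinness of $U$ at $x$ guarantees such a component with $x$ in its closure) and that $\B$ covers $V$ (handled by including constant discs, legitimate since $\varphi$ is bounded). A secondary subtlety is that $\varphi$ here is genuinely discontinuous across $\partial W$, but that is precisely the generality Theorem~4 was designed for, so no extra work is needed there. Once the setup is in place, the passage from $EH(x)=0$ to the measure estimate is immediate from the explicit two-valued form of $\varphi$.
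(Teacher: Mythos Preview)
Your (ii)$\Rightarrow$(i) argument is essentially the paper's: both split the circle integral and use the sub-mean-value inequality; you phrase it by contradiction while the paper shows directly that $u(x)\le\sup_{\overline B(x,r)\cap U}u$ for every small $r$.

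Your (i)$\Rightarrow$(ii) contains a genuine error. With $\varphi_2\equiv 0$ on $W=V\cap U$ and $\varphi_1\equiv -1$ on $V\setminus\overline W$, your displayed line
\[
EH(x)=\limsup_{z\to x,\ z\in W}EH(z)\le\limsup_{z\to x,\ z\in W}\varphi_2(z)=0
\]
only gives $EH(x)\le 0$, which you already had from $\varphi\le 0$; you never produce a lower bound, and in fact none exists. Any $v\in\PSH(V)$ with $v\le\varphi$ has $v\le -1$ on $V\setminus W$; in the interesting case $x\notin U$ (when $x\in U$ the constant disc already gives (ii)) one has $x\in\partial W\cap V$, so $\varphi(x)=-1$ and hence $EH(x)\le -1$. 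Your conclusion ``$EH(x)=0$'' is therefore false.

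The repair is to swap the two values: take $\varphi=-1$ on $W$ and $\varphi=0$ off $\overline W$. Then the constant $-1$ is a competitor, so $EH\ge -1$; on $W$ one has $EH\le\varphi=-1$, hence $EH=-1$ there; non-thinness of $W$ at $x$ now legitimately gives $EH(x)=-1$, and the disc formula yields $f$ with $\sigma(\T\cap f^{-1}(V\cap U))>1-\epsilon$. This is exactly the paper's argument, except that the paper bypasses Theorem~4 and quotes Poletsky's classical theorem directly: the function
\[
u_{U\cap V_1,V_1}(z)=\inf\bigl\{-\sigma(\T\cap f^{-1}(U\cap V_1)):f\in\O(\overline\D,V_1),\ f(0)=z\bigr\}
\]
is plurisubharmonic on a connected neighborhood $V_1\subset\subset V$, equals $-1$ on $U\cap V_1$ by constant discs, and non-thinness of $U$ at $x$ forces $u_{U\cap V_1,V_1}(x)=-1$. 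Going through Poletsky directly also sidesteps the connectedness hypotheses of Theorem~4 (that $W$ and $V\setminus\overline W$ be subdomains), which you flag as an obstacle and which genuinely need not hold for an arbitrary open $U$.
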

\begin{proof}
$i)\Longrightarrow ii)$\\
Let $\epsilon>0$ and $V$ a  neighborhood of $x$. Let $V_1\subset\subset V$ be an open  and connected neighborhood of $x.$ Then by Poletsky's classical theorem the function $u_{U\cap V_1, V_1}$ is plurisubharmonic in $V_1$, where $$u_{U\cap V_1,V_1}(x)=\inf\{-\sigma(\T\cap f^{-1}(U\cap V_1)), f\in \O(\overline\D,V_1), f(0)=x\}.$$ Since $U$ is $non$-$thin$ at $x$ then $$u_{U\cap V_1, V_1}(x)=\limsup_{{z\rightarrow x, \; z\in U\setminus\{x\}}}u_{U\cap V_1,V_1}(z)=-1,$$ thus there is $f\in \O(\overline{\D},V)$ such that $f(0)=x$ and $$-\sigma(\T\cap f^{-1}(V\cap U))<-1+\epsilon.$$
$i)\Longleftarrow ii)$\\
Let $r_0>0$ and $u\in PSH(B(x,r_0)).$ For any $0<r<r_0$ we set $c_r=\sup\{u(z), z\in \overline B(x,r)\cap U \}$. Take $M>|\sup_{\overline B(x,r)} u|.$ By the  hypothesis  in $ii)$ for any $\epsilon>0$ there is $f_{\epsilon}\in \O(\overline\D, B(x,r))$ with $f_{\epsilon}(0)=x$ such that $$ \sigma(\T\cap f_{\epsilon}^{-1}(B(x,r)\cap U))>1-\epsilon.$$  
 Set $A=\T\setminus(\T\cap f_{\epsilon}^{-1}(U\cap B(x,r)))$ thus we have
 \begin{align*}
 u(x) \leq \int_{\T}u\circ f_{\epsilon}(t)d\sigma(t)&\leq \int_{\T\setminus A}u\circ f_{\epsilon}(t)d\sigma(t)+   \int_{ A}u\circ f_{\epsilon}(t)d\sigma(t)  \\ 
 & \leq c_r(1-\sigma(A))  + M \sigma(A) \leq c_r+ (|c_r|+M)\epsilon.
  \end{align*}
 This for all $\epsilon>0$, hence when $\epsilon \rightarrow 0$  we get $u(x)\leq c_r.$ As $r$ was taken arbitrarily then we have $$u(x)\leq \inf_{r>0} c_r=\limsup_{{z\rightarrow x, \; z\in U\setminus\{x\}}} u(z)\leq        \limsup_{{z\rightarrow x}} u(z)=u(x).$$
 This for all $u$ plurisubharmonic in a neighborhood of $x.$ Hence $U$ is $non$-$thin$ at $x.$
\end{proof}
\noindent
In the light of Corollary 4.8.3 in $ [5]$ we have the following.
\begin{corollary}
Let $Y\subset\C^n$ and $x\in \C^n.$ Then the following conditions are equivalent
\begin{itemize}
\item[i)] $Y$ is $non$-$thin$ at $x,$
\item[ii)] For every $\epsilon>0,$ neighborhood  $V$ of $x$ and every open set $U$ containing $Y\setminus \{x\}$ there exists $f\in \O(\overline\D, V)$ such that $f(0)=x$ and $$\sigma(\T\cap f^{-1}(V\cap U))>1-\epsilon.$$
\end{itemize}
\end{corollary}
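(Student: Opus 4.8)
The plan is to obtain this as a purely formal consequence of Theorem~5, using Corollary~4.8.3 of $[5]$ as the bridge between arbitrary subsets and open subsets: that result says that $Y$ is non-thin at $x$ if and only if \emph{every} open set $U$ with $Y\setminus\{x\}\subset U$ is non-thin at $x$ (equivalently, $Y$ is thin at $x$ exactly when some such open $U$ is thin at $x$). Granting this, condition $(ii)$ of the statement is literally condition $(ii)$ of Theorem~5 for the open set $U$, universally quantified over all admissible $U$, so the corollary should follow by a matching of quantifiers together with two elementary observations.

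For $(i)\Rightarrow(ii)$: I would fix an open $U\supset Y\setminus\{x\}$, a neighborhood $V$ of $x$ and $\epsilon>0$, and first verify that $U$ is non-thin at $x$. Since $x\notin Y\setminus\{x\}$ we have $Y\setminus\{x\}\subset U\setminus\{x\}$, whence $x\in\overline{Y\setminus\{x\}}\subset\overline{U\setminus\{x\}}$; and for any plurisubharmonic $u$ defined near $x$,
$$u(x)=\limsup_{z\to x,\,z\in Y\setminus\{x\}}u(z)\le\limsup_{z\to x,\,z\in U\setminus\{x\}}u(z)\le\limsup_{z\to x}u(z)=u(x),$$
forcing equality in the middle term. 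Thus $U$ is non-thin at $x$, and Theorem~5~$(i)\Rightarrow(ii)$ applied to $U$ furnishes $f\in\O(\overline\D,V)$ with $f(0)=x$ and $\sigma(\T\cap f^{-1}(V\cap U))>1-\epsilon$, which is exactly what $(ii)$ asks for.

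For $(ii)\Rightarrow(i)$: let $U$ be an arbitrary open set containing $Y\setminus\{x\}$. By $(ii)$, for every $\epsilon>0$ and every neighborhood $V$ of $x$ there is $f\in\O(\overline\D,V)$ with $f(0)=x$ and $\sigma(\T\cap f^{-1}(V\cap U))>1-\epsilon$; this is precisely condition $(ii)$ of Theorem~5 for the open set $U$, so that theorem gives that $U$ is non-thin at $x$. Since $U$ ranged over all open supersets of $Y\setminus\{x\}$, Corollary~4.8.3 of $[5]$ then yields that $Y$ is non-thin at $x$. (The degenerate case $x\notin\overline{Y\setminus\{x\}}$ is automatically excluded: otherwise one could choose an open $U\supset Y\setminus\{x\}$ and a neighborhood $V$ of $x$ with $U\cap V=\emptyset$, making $\sigma(\T\cap f^{-1}(V\cap U))=0$ for every $f$ and contradicting $(ii)$.)

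I do not expect a genuine obstacle once Theorem~5 is in hand: the only non-elementary ingredient is the transfer principle of Corollary~4.8.3 of $[5]$, and the remaining steps are the trivial $\limsup$ comparison above and the quantifier matching. The one point warranting care is pinning down the exact formulation of Corollary~4.8.3 of $[5]$ so that the equivalence "$Y$ non-thin at $x$ $\iff$ every open $U\supset Y\setminus\{x\}$ is non-thin at $x$" is precisely what is invoked.
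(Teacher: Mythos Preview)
Your proposal is correct and is precisely the intended argument: the paper gives no explicit proof of this corollary, merely prefacing it with ``In the light of Corollary~4.8.3 in $[5]$,'' and your write-up is the natural unpacking of that reference combined with Theorem~5. Your direct $\limsup$ comparison for $(i)\Rightarrow(ii)$ even avoids one invocation of Corollary~4.8.3, but this is a cosmetic difference.
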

\noindent
As a consequence of the corollary above we can remark that if $X$ is a Runge domain and $K\subset X$ is compact then the set of all points at which $K$ is $non$-$thin$ is a subset of the polynomial hull of $K.$\\

For an open set $X\subset \C^n$, $u\in PSH(X)$ and $K\subset X$ compact it is well known (by the classical maximum principle) that $\sup_{K} u=\sup_{\partial K} u.$ We will state a similar result for certain subsets of $X$ not necessarily compact.
\begin{theorem}
Let $U\subset\subset X\subset \C^n$, where $X$ is open. If $U$is $non$-$thin$ at every point of its closure, then for all $u\in PSH(X)$ one has $$\sup_U u=\sup_{\partial U} u.$$
\end{theorem}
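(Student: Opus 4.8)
The plan is to establish the two inequalities $\sup_U u\le\sup_{\partial U}u$ and $\sup_{\partial U}u\le\sup_U u$ separately. For the first, write $U=\bigcup_i U_i$ as the union of its connected components. Since $U\subset\subset X$ and $\C^n$ is locally connected, each $U_i$ is a nonempty bounded domain whose closure $\overline{U_i}\subseteq\overline U$ is a compact subset of $X$; hence $u\in\PSH(X)$ is plurisubharmonic on a neighborhood of $\overline{U_i}$ and upper semicontinuous on the compact set $\overline{U_i}$, so the classical maximum principle gives $\sup_{U_i}u\le\sup_{\overline{U_i}}u=\sup_{\partial U_i}u$ (note $\partial U_i\neq\emptyset$ as $U_i$ is nonempty and bounded). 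A standard point-set fact about components of an open set yields $\partial U_i\subseteq\partial U$: a boundary point $p$ of $U_i$ cannot lie in $U$, for otherwise it would lie in some other component $U_j$, which is open and disjoint from $U_i$, contradicting $p\in\overline{U_i}$; thus $p\in\overline{U_i}\setminus U\subseteq\partial U$. Consequently $\sup_{U_i}u\le\sup_{\partial U}u$, and taking the supremum over $i$ gives $\sup_U u\le\sup_{\partial U}u$. Note that this half uses neither non-thinness nor relative compactness beyond boundedness of the components.

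For the reverse inequality one invokes the hypothesis directly. Fix $x\in\partial U\subseteq\overline U$. By assumption $U$ is non-thin at $x$, and since $x\notin U$ we have $U\setminus\{x\}=U$, so the definition of non-thinness applied to $u$ (which is plurisubharmonic on a neighborhood of $x$) gives $u(x)=\limsup_{z\to x,\ z\in U}u(z)\le\sup_U u$. Taking the supremum over all $x\in\partial U$ yields $\sup_{\partial U}u\le\sup_U u$, and combining the two inequalities finishes the proof. (The degenerate case $\sup_U u=-\infty$ is covered automatically, since non-thinness then forces $u\equiv-\infty$ on $\partial U$ as well; equivalently one may reduce to $u\not\equiv-\infty$ on $U$ from the start.) If one prefers to avoid quoting the $\limsup$ characterization, the same conclusion follows from the disc-theoretic description of non-thinness in Theorem~6 / Corollary~7: for each $\varepsilon>0$ pick an analytic disc through $x$ with more than $1-\varepsilon$ of its boundary mass mapped into $U$ and apply the sub-mean-value inequality to $u$.

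The routine verifications are exactly the topological facts used above — $\overline{U_i}$ compact and contained in $X$, $\partial U_i$ nonempty and contained in $\partial U$ — together with the applicability of the classical maximum principle to each component; all of these are standard. The conceptual core, and the only place the hypothesis is genuinely needed, is the second inequality: because $\partial U\cap U=\emptyset$, the values of $u$ on $\partial U$ are a priori unrelated to the supremum of $u$ over $U$, and it is precisely non-thinness at each boundary point that forces $u(x)$ to be a limit of values taken by $u$ inside $U$. Thus the "main obstacle" reduces to the observation that non-thinness along $\overline U$ is exactly the mechanism upgrading the component-wise maximum principle to a maximum principle for $U$ itself.
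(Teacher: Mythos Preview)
Your argument for the reverse inequality $\sup_{\partial U}u\le\sup_U u$ is correct and is exactly what the paper does: non-thinness at each $x\in\partial U$ gives $u(x)=\limsup_{z\to x,\,z\in U}u(z)\le\sup_U u$.

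There is, however, a genuine gap in your proof of $\sup_U u\le\sup_{\partial U}u$. The statement does \emph{not} assume $U$ is open---only that $U\subset\subset X$ with $X$ open---yet your decomposition $U=\bigcup_i U_i$ into connected components and the claim that each $U_i$ is a \emph{domain} tacitly use openness of $U$ (in a locally connected space components of an open set are open; for arbitrary $U$ they need not be). The paper does apply the theorem to non-open sets: Corollary~8 takes $U$ to be an arbitrary connected subset, relying on the fact that connected sets with more than one point are non-thin at every point of their closure. For such $U$ your component argument collapses.

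The paper's route avoids this entirely and is also shorter: apply the classical maximum principle directly to the compact set $\overline U$ to get
\[
\sup_U u\le\sup_{\overline U}u=\sup_{\partial\overline U}u\le\sup_{\partial U}u,
\]
the last inequality coming from the elementary inclusion $\partial\overline U\subseteq\partial U$ (since $\operatorname{int}(U)\subseteq\operatorname{int}(\overline U)$). No connectedness, no component decomposition, and no openness of $U$ is needed. Replacing your first paragraph with this one-line chain repairs the proof and matches the paper's argument.
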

\begin{proof} Let $U$ be $non$-$thin$ at every point of its closure. By the classical maximum principle we have  
$$\sup_U u\leq \sup_{\overline U} u=\sup_{\partial \overline U} u\leq \sup_{\partial U} u.$$
Let $x\in\partial U$ and $r>0$ small so that $\overline B(x,r)\subset X.$ As $U$ is $non$-$thin$ at $x$ then $$u(x)= \inf_{\rho>0}\sup_{U\cap\overline B(x,\rho)} u  \leq\sup_{U\cap\overline B(x,r)} u \leq \sup_U u.$$
This for all $x\in \partial U$. Hence $$\sup_{\partial U} u\leq \sup_U u.$$
\end{proof}
\noindent
Actually we have $\sup_U u=\sup_{\partial \overline U} u= \sup_{\partial U} u.$
\begin{corollary}
Let $X\subset\C^n$ be open and $U\subset\subset X$ be a connected subset, then $$ \sup_U u=\sup_{\partial U} u.$$ For all $u\in PSH(X).$
\end{corollary}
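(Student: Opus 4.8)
The plan is to obtain this as an immediate consequence of Theorem 8. The only hypothesis of Theorem 8 that is not literally part of the statement is that $U$ be $non$-$thin$ at every point of $\overline U$, so the whole task reduces to supplying that. First I would dispose of the degenerate case: if $U=\{x\}$ is a single point, then $\overline U=\{x\}$ and $\partial U=\{x\}$, so the asserted identity reads $u(x)=u(x)$ and there is nothing to prove. Hence we may assume $U$ contains at least two points.

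Next, since $U$ is connected and has more than one point, the classical fact quoted in Section 2 (Theorem 3.8.3 in $[4]$) says that $U$ is $non$-$thin$ at every point of its closure. Combined with the assumption $U\subset\subset X$, this means $U$ satisfies every hypothesis of Theorem 8, and applying that theorem gives $\sup_U u=\sup_{\partial U} u$ for all $u\in PSH(X)$. (If one prefers to avoid citing the connectedness criterion, one can instead invoke Theorem 6 together with the continuity-of-analytic-discs argument used in the proof of Theorem 8: for $x\in\partial U$ one pushes an analytic disc through $x$ spending almost all of its boundary mass inside $U$ near $x$, and then lets the mass of the exceptional arc tend to $0$; but going through the already-proved Theorem 8 is cleaner.)

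I do not expect any real obstacle here. The mathematical content is entirely contained in Theorem 8 and in the standard potential-theoretic fact that a connected set with more than one point is $non$-$thin$ along its closure; the corollary is just the specialization. The only point meriting a sentence of care is the interpretation of $\partial U$ as the topological boundary in $\C^n$ (so that the singleton case is handled correctly) and the verification that the relative-compactness hypothesis $U\subset\subset X$ is carried over unchanged into the hypothesis of Theorem 8.
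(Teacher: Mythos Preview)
Your proposal is correct and matches the paper's intended argument: the corollary is stated without proof immediately after Theorem~7, relying on the fact (recalled right after Definition~1, citing Theorem~3.8.3 in~[4]) that a connected set with more than one point is non-thin at every point of its closure, so that Theorem~7 applies directly. Note only that what you call ``Theorem~8'' is Theorem~7 in the paper's numbering (the corollary you are proving is itself number~8), and your handling of the singleton case is a harmless addition the paper omits.
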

\begin{corollary}
Let $X\subset\C^n$ be open and let $U\subset\subset X$ be open. If $U$  is $non$-$thin$ at every point of its closure, then $$u_{U,X}=u_{\overline U,X}.$$
Moreover if $X$ is hyperconvex, then $u_{U,X}$ is continuous in $X$ and $\lim_{z\rightarrow \partial X} u_{U,X}(z)=0.$
\end{corollary}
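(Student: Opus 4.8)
The plan is to prove the two halves separately. For the identity $u_{U,X}=u_{\overline U,X}$, recall first that, $U$ being open, $-\chi_U$ is upper semicontinuous, so by Poletsky's classical theorem $u_{U,X}=\sup\{v\in PSH(X):v\le -\chi_U\}$ is plurisubharmonic on $X$ (the same observation used in the proof of Theorem~5 for $u_{U\cap V_1,V_1}$); likewise $u_{\overline U,X}=\sup\{v\in PSH(X):v\le -\chi_{\overline U}\}$ is the relative extremal function of the compact set $\overline U$. Constant discs show that $-1\le u_{U,X}\le 0$ on $X$ and $u_{U,X}\equiv -1$ on $U$. The inequality $u_{\overline U,X}\le u_{U,X}$ is immediate, since $-\chi_{\overline U}\le -\chi_U$ makes the supremum defining $u_{\overline U,X}$ run over a smaller family. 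For the reverse inequality I would apply Theorem~7 to the plurisubharmonic function $u_{U,X}$ (legitimate because $U\subset\subset X$ and $U$ is non-thin at every point of $\overline U$): this gives $\sup_{\partial U}u_{U,X}=\sup_{U}u_{U,X}=-1$, hence $u_{U,X}\le -1$ on $\overline U$, i.e. $u_{U,X}\le -\chi_{\overline U}$ on $X$. Being plurisubharmonic, $u_{U,X}$ is then a competitor in the supremum defining $u_{\overline U,X}$, so $u_{U,X}\le u_{\overline U,X}$, and equality follows.

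Now suppose $X$ is hyperconvex and fix a continuous negative plurisubharmonic exhaustion $\rho$ of $X$, rescaled so that $\rho\le -1$ on the compact set $\overline U$. Then $v:=\max(\rho,-1)$ is continuous and plurisubharmonic on $X$, satisfies $-1\le v\le 0$, equals $-1$ on $\overline U$, hence $v\le -\chi_{\overline U}$; consequently $v\le u_{\overline U,X}=u_{U,X}\le 0$ on $X$. Near $\partial X$ one has $v=\rho\to 0$, so the squeeze $v\le u_{U,X}\le 0$ forces $u_{U,X}(z)\to 0$ as $z\to\partial X$, which proves the limit assertion and already gives continuity of $u_{U,X}$ in a neighbourhood of $\partial X$. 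For continuity in the interior, $u_{U,X}$ is upper semicontinuous (it is plurisubharmonic), and lower semicontinuity is immediate at points of $\overline U$ (there $u_{U,X}\equiv -1$ while $u_{U,X}\ge -1$ everywhere), so only points of $X\setminus\overline U$ remain. Here I would invoke the classical continuity theorem for the regularized relative extremal function of a regular compact subset of a hyperconvex domain (see $[5]$): by the first part $u_{U,X}=u_{\overline U,X}$ is precisely that extremal function for $K=\overline U$, and the non-thinness hypothesis makes $\overline U$ regular (indeed we have shown $u_{U,X}\equiv -1$ on all of $\overline U$), so $u_{U,X}$ is continuous throughout $X$.

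The main obstacle is exactly this interior lower semicontinuity on $X\setminus\overline U$. One would like to prove it directly, transferring competing discs between nearby base points by translation, as in the proofs of Proposition~1 and Lemma~3, but two difficulties appear. First, a competitor $f$ through a point $y$ near $x_0$ may run arbitrarily close to $\partial X$, so its translate need not stay in $X$; this is precisely what hyperconvexity repairs, since the barrier $v$ both pins down the behaviour at $\partial X$ and lets one confine the relevant discs to a fixed compact subset of $X$ on which small translations are harmless. Second, even for confined discs one must ensure that the arc-length $\sigma(\T\cap f^{-1}(U))$ does not drop under a small translation, which requires a further approximation (for instance replacing $U$ by the slightly smaller open sets $\{z\in U:\operatorname{dist}(z,\C^n\setminus U)>\eta\}$) that is not uniform in $f$. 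For this reason the cleanest route is to quote the classical regularity result once the identity $u_{U,X}=u_{\overline U,X}$ has reduced the problem to the relative extremal function of a regular compact set; the genuinely new input of the corollary is that reduction, which is where Theorem~7 enters.
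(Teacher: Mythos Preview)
Your proof is correct and follows the same route as the paper's: the inequality $u_{\overline U,X}\le u_{U,X}$ is immediate, Theorem~7 applied to the plurisubharmonic function $u_{U,X}$ forces $u_{U,X}\equiv -1$ on $\overline U$ and hence the reverse inequality, and the hyperconvex case is reduced to the standard results in \textup{[5]} on the relative extremal function of a compact set. The only differences are cosmetic---you justify $u_{U,X}\in\PSH(X)$ via Poletsky's theorem where the paper simply asserts it, and you spell out the barrier argument for the boundary limit where the paper cites Proposition~4.5.2 of \textup{[5]}; your closing discussion of the obstacles to a direct lower-semicontinuity argument is commentary rather than part of the proof.
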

\begin{proof}
Recall that $$u_{U,X}=\sup\{v\in PSH(X), v<0, v|U\leq -1\},$$ $$u_{\overline U,X}=\sup\{v\in PSH(X), v<0, v|\overline U\leq -1\}.$$ Notice that for all $v\in PSH(X)^-$ with $v\leq-1$ on $\overline U.$ We have $v\leq u_{U,X}$. Thus one has $$u_{\overline U,X}\leq u_{U,X}.$$ As $u_{U,X}\in PSH(X)^-$ and $u_{U,X}=-1$ in $U$, then by Theorem 7 $u_{U,X}=-1$ on $\overline U$. Hence $u_{U,X}$ is in the family defining $u_{\overline U,X}$. Thus $$ u_{U,X}\leq u_{\overline U,X}.$$
If $X$ is hyperconvex then by Proposition 4.5.3 in $[5]$, $u_{\overline U,X}$ is continuous on $X,$ hence $u_{U,X}$ is continuous and by Proposition 4.5.2 we have  $\lim_{z\rightarrow \partial X} u_{U,X}(z)=0.$
\end{proof}
As a consequence we can see that the capacity of a bounded open and connected set coincides with the capacity of its closure.

\end{document}